\documentclass[11pt]{article}

\usepackage{amsmath, amsthm, amssymb, tikz}
\usepackage{amsfonts}
\usepackage{fullpage}
\usepackage[enableskew]{youngtab}

\newcommand\beq{\begin{equation}}
\newcommand\eeq{\end{equation}}
\newcommand\bce{\begin{center}}
\newcommand\ece{\end{center}}
\newcommand\bea{\begin{eqnarray}}
\newcommand\eea{\end{eqnarray}}
\newcommand\ba{\begin{array}}
\newcommand\ea{\end{array}}
\newcommand\ben{\begin{enumerate}}
\newcommand\een{\end{enumerate}}
\newcommand\bit{\begin{itemize}}
\newcommand\eit{\end{itemize}}
\newcommand\brr{\begin{array}}
\newcommand\err{\end{array}}
\newcommand\bt{\begin{tabular}}
\newcommand\et{\end{tabular}}

\newcommand{\la}{\lambda}

\newcommand{\NN}{\mathbb{N}}

\newtheorem{theorem}{Theorem}[section]

\newtheorem{lemma}[theorem]{Lemma}

\numberwithin{figure}{section}

\title{On exponential growth of degrees}

\author{Yuval
Roichman~\thanks{Department of Mathematics, Bar-Ilan University,
 Ramat-Gan 52900, Israel.  {\tt yuvalr@math.biu.ac.il}.}}

\begin{document}

\maketitle

\begin{abstract}
A short proof to a recent theorem of Giambruno and Mishchenko is
given in this note.
\end{abstract}

\section{The theorem}

The following theorem was recently proved by Giambruno and
Mishchenko.

\begin{theorem}\label{main}\cite[Theorem 1]{GM}
For every $0<\alpha<1$, there exist $\beta>1$ and $n_0\in \NN$,
such that for every partition $\la$ of $n\ge n_0$ with
$\max\{\la_1,\la'_1\}<\alpha n$
\[
f^\la\ge \beta^n .
\]
\end{theorem}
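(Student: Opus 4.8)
The plan is to estimate $f^\lambda$ directly through the hook length formula $f^\lambda = n!/\prod_{c\in\lambda}h(c)$, where $h(c)$ is the hook length of the cell $c$, and to bound the denominator from above in a way that is clean enough to expose an exponential gain yet tight on the extremal shapes. For a cell $c=(i,j)$ one has $h(i,j)=a(i,j)+\ell(i,j)+1$, where $a=\lambda_i-j$ is the arm and $\ell=\lambda'_j-i$ is the leg; since $a+\ell+1\le(a+1)(\ell+1)$, multiplying over all cells and using that the quantities $a(i,j)+1$ in row $i$ are $\lambda_i,\lambda_i-1,\dots,1$ (and dually for legs down each column) gives $\prod_{c\in\lambda}h(c)\le\prod_i\lambda_i!\cdot\prod_j\lambda'_j!$. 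Hence $f^\lambda\ge n!\big/\big(\prod_i\lambda_i!\prod_j\lambda'_j!\big)$, a bound that is an equality for a single row or single column and is essentially tight for hooks.

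First I would take logarithms and reduce the theorem to showing that $\tfrac1n\big(\log n!-\sum_i\log\lambda_i!-\sum_j\log\lambda'_j!\big)$ is bounded below by a positive constant $\delta=\delta(\alpha)$ for all admissible $\lambda$. Writing the two middle sums cell-wise as $\sum_i\log\lambda_i!=\sum_{(i,j)\in\lambda}\log(\lambda_i-j+1)$ and likewise for columns, or equivalently grouping cells by hook size, recasts the quantity as an entropy-type functional of the row profile $(\lambda_i/n)$ and the column profile $(\lambda'_j/n)$. The constraint $\max\{\lambda_1,\lambda'_1\}<\alpha n$ keeps both profiles away from the two degenerate configurations (all mass in one row, or all mass in one column) that force the expression to vanish, so I expect the infimum over admissible shapes to be a strictly positive function of $\alpha$ that tends to $0$ only as $\alpha\to1$. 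I would verify positivity by checking the extremal shapes — the hook $(\alpha n,1^{(1-\alpha)n})$, which yields rate $\approx H(\alpha)$, and the thin rectangle with $\Theta(1)$ rows, which yields the positive rate $\log d-\tfrac1d\log d!$ — and confirming that these are the minimizers.

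The main obstacle is precisely this uniform lower bound, and the delicacy is that an admissible $\lambda$ may have up to $\alpha n$ rows and $\alpha n$ columns, most of them very short. Applying Stirling's approximation part-by-part would incur an error of order $n\log n$, which swamps the linear-in-$n$ signal that survives from the near-cancellation of the $n\log n$ terms in $\log n!$ and in $\sum_i\log\lambda_i!+\sum_j\log\lambda'_j!$. I would therefore separate rows and columns by size, treating the $\Theta(n)$ short parts exactly and applying Stirling only to the $O(\sqrt n)$ long ones, or, more robustly, argue by convexity and compactness: rescale and pass to a limiting variational problem for the profiles, whose functional is continuous and whose minimum over the compact set of $\alpha$-admissible profiles is attained and strictly positive. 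Either route converts the clean hook-length inequality into the desired bound $f^\lambda\ge\beta^n$ with $\beta=e^{\delta(\alpha)}>1$.
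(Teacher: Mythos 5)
Your first step is correct and is genuinely different from the paper's: from $h(i,j)=a+\ell+1\le(a+1)(\ell+1)$ you get $\prod_{(i,j)\in[\lambda]}h_{ij}\le\prod_i\lambda_i!\,\prod_j\lambda'_j!$, hence $f^\lambda\ge n!\big/\big(\prod_i\lambda_i!\prod_j\lambda'_j!\big)$, and this bound is indeed exponentially tight for hooks. The reduction of the theorem to the claim that $\frac{1}{n}\big(\log n!-\sum_i\log\lambda_i!-\sum_j\log\lambda'_j!\big)\ge\delta(\alpha)>0$ for all admissible $\lambda$ is also valid, and the claim appears to be true. The gap is that you do not prove it, and it is exactly where the entire difficulty of the theorem sits. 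Two concrete points. First, the obstacle is not only the Stirling-error issue you flag (which is real: with $\Theta(n)$ parts of bounded size the correction terms are themselves $\Theta(n)$, the same order as the signal; for the $2\times(n/2)$ rectangle the entropy main term vanishes identically and the whole rate $\tfrac{1}{2}\log 2$ comes from those corrections). Second, and more fundamentally, no argument that treats the row profile and the column profile as two separate objects, each constrained only by $\lambda_1,\lambda'_1\le\alpha n$ and total mass $n$, can work: under those constraints alone one can make $\sum_i\log\lambda_i!+\sum_j\log\lambda'_j!$ exceed $\log n!$ by $\Theta(n\log n)$ --- take both multisets of parts to consist of $1/\alpha$ parts of size $\alpha n$, a pair realized by no partition. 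So any proof must quantitatively exploit the conjugacy coupling, i.e.\ that many long rows force short columns (for instance via $\lambda'_k\le n/k$). Your variational sketch never says where this coupling enters: it does not define the space of coupled profiles, nor the limiting functional (which must carry the dust corrections alongside the entropy terms), nor why that functional is semicontinuous on a compact set in a topology that sees both macroscopic parts and dust; and ``confirming that the hook and the thin rectangle are the minimizers'' is precisely the statement to be proved, not a verification one can do by inspecting two examples.

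For contrast, the paper resolves this same row/column tension with two elementary lemmas: the hooks strictly below the first row have product at most $(n-\lambda_1)!$ (because $f^{\bar\lambda}\ge 1$ for $\bar\lambda=(\lambda_2,\dots,\lambda_t)$), and the first-row hooks have product at most $\binom{n}{k}\big(\lambda_1+\lfloor(n-\lambda_1)/k\rfloor\big)!$, the key point being $\bar\lambda'_k\le(n-\lambda_1)/k$ --- this is the precise place where ``there cannot be many tall columns'' is used. Everything then collapses to a one-variable optimization in $\gamma=\lambda_1/n$ with the single tunable parameter $k=\epsilon n$, carried out explicitly. Your route may well be completable, and the inequality $n!\ge e^{\delta n}\prod_i\lambda_i!\prod_j\lambda'_j!$ for constrained shapes would be a nice statement in its own right, but as written it is asserted rather than proved, so the proposal is a plan for a proof rather than a proof.
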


The proof of Giambruno and Mishchenko is rather complicated and
applies a clever order on the cells of the Young diagram. It
should be noted that Theorem~\ref{main} is an immediate
consequence of Rasala's lower bounds on minimal
degrees~\cite[Theorems F and H]{Rasala}. The proof of Rasala is
very different and not less complicated; it relies heavily on his
theory of degree polynomials. In this short note we suggest a
short and relatively simple proof to Theorem~\ref{main}.

\bigskip

First, note that the following weak version is an immediate
consequence of the hook-length formula.

\begin{lemma}\label{lemma0}
The theorem holds for every $0<\alpha< \frac{1}{2e}$.
\end{lemma}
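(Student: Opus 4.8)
The plan is to apply the hook-length formula directly and to bound every hook length from above by a quantity that is linear in $n$. Recall that $f^\la = n!/\prod_{c} h(c)$, where the product runs over the $n$ cells $c$ of the Young diagram of $\la$ and $h(c)$ denotes the hook length of $c$. The key observation is that under the hypothesis $\max\{\la_1,\la'_1\}<\alpha n$ every single hook length is small: writing the hook length of a cell $(i,j)$ as arm plus leg plus one gives $h(i,j)=(\la_i-j)+(\la'_j-i)+1\le(\la_1-1)+(\la'_1-1)+1=\la_1+\la'_1-1<2\alpha n$. Since the diagram has exactly $n$ cells, this yields $\prod_c h(c)<(2\alpha n)^n$.

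Next I would invoke the elementary lower bound $n!>(n/e)^n$, valid for all $n\ge 1$, which follows from $e^n=\sum_{k\ge 0} n^k/k!>n^n/n!$. Combining the two estimates gives
\[
f^\la=\frac{n!}{\prod_c h(c)}>\frac{(n/e)^n}{(2\alpha n)^n}=\left(\frac{1}{2\alpha e}\right)^n.
\]
Since $\alpha<\frac{1}{2e}$ we have $2\alpha e<1$, so $\beta:=\frac{1}{2\alpha e}>1$, and the displayed inequality yields $f^\la>\beta^n$, hence $f^\la\ge\beta^n$, for every admissible $\la$; in fact one may take $n_0=1$.

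There is essentially no serious obstacle, which is why only the weak range $\alpha<\frac{1}{2e}$ is reached by this argument: the entire content lies in the uniform hook-length bound $h(c)\le\la_1+\la'_1-1$, and the remaining steps are the standard factorial estimate and a one-line comparison. The only point deserving attention is that the estimate on individual hook lengths must hold simultaneously for all $n$ cells, which is precisely why the constraint is imposed on both $\la_1$ and $\la'_1$ rather than on one of them alone; controlling a single part would bound the arms but not the legs of the hooks (or conversely), and the product bound would fail.
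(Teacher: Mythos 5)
Your proof is correct and follows essentially the same route as the paper: bound every hook length by $\la_1+\la'_1<2\alpha n$, apply the hook-length formula together with $n!\ge (n/e)^n$, and take $\beta=\frac{1}{2e\alpha}>1$. Your version is marginally sharper in two small ways (the explicit elementary derivation of $n!>(n/e)^n$ lets you take $n_0=1$ rather than ``sufficiently large $n$'', and you note the hook bound can be improved to $\la_1+\la'_1-1$), but the argument is the same.
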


\begin{proof}
Under the assumption, for every $(i,j)\in [\la]$
\[
h_{i,j}\le h_{1,1}\le \la_1+\la_1'\le 2\alpha {n}.
\]
Hence, by the hook formula together with Stirling formula, for
sufficiently large $n$
\[
f^\la={n!\over \prod\limits_{(i,j)\in [\la]}h_{i,j}}\ge {n!\over
({2\alpha n})^n}\ge {({n\over e})^n\over ({2\alpha n})^n}
=\beta^n,
\]
where, by assumption, $\beta:={1\over 2e\alpha}>1$.
\end{proof}

\section{Two lemmas}

\begin{lemma}\label{lemma1}
For every $\la\vdash n$
\[
\prod\limits_{(i,j)\in [\la]\atop 1<i}h_{ij}\le (n-\la_1)!
\]
\end{lemma}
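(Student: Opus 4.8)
The plan is to recognize the left-hand product as the full hook-length product of a smaller partition, and then to invoke the hook-length formula. Concretely, let $\mu := (\la_2, \la_3, \dots)$ be the partition obtained from $\la$ by deleting its first row, so that $\mu \vdash n - \la_1$, and the cells of $[\mu]$ correspond to the cells $(i,j) \in [\la]$ with $i > 1$ via the downward shift $(i,j) \mapsto (i-1,j)$.

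The key step is to show that deleting the first row leaves the hook lengths of all remaining cells unchanged: for every cell $(i,j) \in [\la]$ with $i > 1$, the hook length $h_{ij}$ computed in $[\la]$ equals the hook length of the cell $(i-1,j)$ computed in $[\mu]$. The arm length is clearly preserved, since both the row length $\la_i = \mu_{i-1}$ and the column index $j$ are unaffected. For the leg length, observe that every such cell satisfies $j \le \la_i \le \la_1$, so column $j$ of $[\la]$ contains a first-row cell; deleting it lowers the column height $\la'_j$ by exactly one, while the row index of the cell drops from $i$ to $i-1$. These two changes cancel, so the leg length $\la'_j - i$ is preserved as well. Hence each hook length is unchanged.

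With this identification, the hook-length formula applied to $\mu$ yields
\[
\prod_{(i,j)\in[\la],\, 1<i} h_{ij} = \prod_{(i,j)\in[\mu]} h_{ij} = \frac{(n-\la_1)!}{f^{\mu}} \le (n-\la_1)! ,
\]
where the final inequality uses $f^{\mu} \ge 1$. This is precisely the desired bound.

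I expect the only delicate point to be the leg-length bookkeeping in the key step, namely verifying that the unit drop in column height and the unit shift of the row index cancel exactly; this is a short and direct computation that relies on the observation $j \le \la_1$. Everything else is an immediate consequence of the hook-length formula, so the argument should be very concise.
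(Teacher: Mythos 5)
Your proof is correct and takes essentially the same route as the paper: both delete the first row to obtain a partition $\bar\la \vdash n-\la_1$, identify the hooks of the cells in rows $i>1$ of $[\la]$ with the hooks of $[\bar\la]$, and conclude from the hook-length formula together with $f^{\bar\la}\ge 1$. The only difference is that you verify the hook-length preservation explicitly, a step the paper leaves implicit.
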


\begin{proof}
For $\la=(\la_1,\la_2,\dots,\la_t)\vdash n$ let
$\bar\la:=(\la_2,\dots,\la_t)\vdash n-\la_1$. Then
\[
1\le f^{\bar\la}={(n-\la_1)!\over \prod\limits_{(i,j)\in
[\la]\atop 1<i}h_{ij}}.
\]
\end{proof}

\begin{lemma}\label{lemma2}
For every $\la\vdash n$ and $1\le k\le \la_1$
\[
\prod\limits_{(1,j)\in [\la]}h_{1j} \le {n\choose k}
(\la_1+\lfloor n-{\la_1\over k}\rfloor)! .
\]
\end{lemma}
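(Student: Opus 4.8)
The plan is to rewrite the product of first-row hook lengths so that two features become visible: the hooks are \emph{distinct} integers, and the column lengths obey the single linear constraint $\sum_j(\la'_j-1)=n-\la_1$. Recall $h_{1j}=\la_1-j+\la'_j$, and that these $\la_1$ values strictly decrease in $j$, so they are distinct positive integers whose largest is $h_{11}=\la_1+\la'_1-1\le n$. Setting $d_i:=\la'_{\la_1+1-i}-1$, I would note that $0\le d_1\le\cdots\le d_{\la_1}$, that $\sum_{i=1}^{\la_1}d_i=n-\la_1$, and that $\prod_{(1,j)\in[\la]}h_{1j}=\prod_{i=1}^{\la_1}(i+d_i)$; thus the factors $v_i:=i+d_i$ are distinct positive integers with $v_{\la_1}=h_{11}\le n$.

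Next, fix $1\le k\le\la_1$ and split the index set into a top block $L=\{\la_1-k+1,\dots,\la_1\}$ and a bottom block $S=\{1,\dots,\la_1-k\}$. Over $L$ I would bound crudely: the $k$ factors $v_i$ there are distinct integers at most $v_{\la_1}\le n$, so $\prod_{i\in L}v_i\le n(n-1)\cdots(n-k+1)=n!/(n-k)!$. Over $S$ the key input is a pigeonhole estimate. Since the $k$ largest values $d_{\la_1-k+1}\le\cdots\le d_{\la_1}$ sum to at most $n-\la_1$, their minimum satisfies $d_{\la_1-k+1}\le(n-\la_1)/k$, and hence $d_i\le\lfloor(n-\la_1)/k\rfloor$ for all $i\in S$. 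Therefore the factors $v_i=i+d_i$ with $i\in S$ are distinct integers at most $M:=(\la_1-k)+\lfloor(n-\la_1)/k\rfloor$, giving $\prod_{i\in S}v_i\le M(M-1)\cdots(\lfloor(n-\la_1)/k\rfloor+1)=M!/\lfloor(n-\la_1)/k\rfloor!$.

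Multiplying the two estimates yields $\prod_{(1,j)}h_{1j}\le \frac{n!}{(n-k)!}\cdot\frac{M!}{\lfloor(n-\la_1)/k\rfloor!}$, and the last step is to check this is at most $\binom{n}{k}\bigl(\la_1+\lfloor(n-\la_1)/k\rfloor\bigr)!=\frac{n!}{k!\,(n-k)!}(M+k)!$. After cancelling $n!/(n-k)!$ this reduces to $k!\le\lfloor(n-\la_1)/k\rfloor!\,(M+1)(M+2)\cdots(M+k)$, which holds because $M\ge0$ forces $M+j\ge j$ for each $1\le j\le k$, so already $(M+1)\cdots(M+k)\ge k!$. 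I expect the only real friction to be exactly this bookkeeping: the two halves naturally produce $n!/(n-k)!$ rather than the binomial coefficient, and the missing factor $k!$ must be recovered precisely from the product $(M+1)\cdots(M+k)$ sitting inside $(M+k)!/M!$. Finally, since $\lfloor(n-\la_1)/k\rfloor\le\lfloor n-\la_1/k\rfloor$ for $k\ge1$, this argument delivers the stated inequality, in fact a slightly sharper one.
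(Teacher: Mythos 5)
Your proof is correct and follows essentially the same route as the paper's: split the first-row hooks at column $k$, bound the first $k$ of them by $n(n-1)\cdots(n-k+1)$ using distinctness and $h_{1,1}\le n$, and control the remaining ones via the same pigeonhole bound $\la'_k-1\le\lfloor(n-\la_1)/k\rfloor$ (your $d_{\la_1-k+1}$ is exactly the paper's $\bar\la'_k$). Your re-indexing by $d_i$ and your recovery of the factor $k!$ from $(M+1)\cdots(M+k)$ rather than from $(\lfloor(n-\la_1)/k\rfloor+k)!\ge k!$ are only cosmetic variations, yielding the same (in fact marginally sharper) bound.
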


\begin{proof}
Obviously, $h_{1,1}>h_{1,2}>\cdots>h_{1,\la_1}$. Since $h_{1,1}\le
n$ it follows that
\[
\prod\limits_{(1,j)\in [\la]\atop j\le k}h_{1j}\le (n)_k
\]
and
\[
\prod\limits_{(1,j)\in [\la]\atop k<j}h_{1j}\le
(h_{1,k})_{\la_1-k}.
\]
To complete the proof, notice, that by definition,
$\sum\limits_{i=1}^k \bar\la_i'\le n-\la_1$. Hence $\bar\la'_k\le
\lfloor {n-\la_1\over k}\rfloor$ and thus
\[
h_{1,k}=\la_1-k+\la'_k=\la_1-k+1+\bar\la_k'\le \la_1+\bar\la_k'\le
\la_1+\lfloor {n-\la_1\over k}\rfloor.
\]
We conclude that
\[
\prod\limits_{(1,j)\in [\la]\atop k<j}h_{1j}\le
(h_{1,k})_{\la_1-k}\le (\la_1+\lfloor{n-\la_1\over
k}\rfloor)_{\la_1-k}={(\la_1+\lfloor{n-\la_1\over k}\rfloor)!\over
(\lfloor{n-\la_1\over k}\rfloor+k)!}\le
{(\la_1+\lfloor{n-\la_1\over k}\rfloor)!\over k!}.
\]
Thus
\[
\prod\limits_{(1,j)\in [\la]}h_{1j}=\prod\limits_{(1,j)\in
[\la]\atop j\le k}h_{1j}\prod\limits_{(1,j)\in [\la]\atop
k<j}h_{1j} \le (n)_k {\la_1+\lfloor{n-\la_1\over k}\rfloor!\over
k!}= {n\choose k} (\la_1+\lfloor{n-\la_1\over k}\rfloor)!.
\]

\end{proof}

\section{Proof of Theorem~\ref{main}}

For the sake of simplicity the floor notation is omitted in this
section.

\bigskip

By Lemmas~\ref{lemma1} and~\ref{lemma2},
\[
f^\la={n!\over \prod\limits_{(i,j)\in [\la]}h_{ij}}={n!\over
\prod\limits_{(1,j)\in [\la]}h_{1j}\prod\limits_{(i,j)\in
[\la]\atop 1<i}h_{ij}}\ge {n!\over (n-\la_1)!{n\choose k} (\la_1+
n-{\la_1\over k})!}= {(n-k)!k!\over (n-\la_1)!
(\la_1+{n-\la_1\over k})!}.
\]
Denote $\gamma_n:={\la_1\over n}$. By Lemma~\ref{lemma0}, we may
assume that ${1\over 2e} < \gamma_n <\alpha$. Choose $k=\epsilon
n$ for a constant $\epsilon=\epsilon(\alpha)$ to be defined later.
By the Stirling formula, the lower bound in the RHS asymptotically
equals to
\[
{((1-\epsilon)n)!(\epsilon n)! \over ((1-\gamma_n)n)! (\gamma_n
n+{1-\gamma_n\over \epsilon})!}\sim
\sqrt{\epsilon(1-\epsilon)\over (1-\gamma_n)(\gamma_n+{c_n\over
n})} \cdot {(1-\epsilon)^{(1-\epsilon)n}\epsilon^{\epsilon n}
\over (1-\gamma_n)^{(1-\gamma_n)n} (\gamma_n+{c_n\over
n})^{(\gamma_n+{c_n\over n})n}}\cdot ({e\over n})^{c_n} ,
\]
where $c_n:={1-\gamma_n\over \epsilon}$. Thus ${1-\alpha\over
\epsilon}<c_n<{2e-1\over 2e \epsilon}$.
Hence, for sufficiently large $n$ 
\[
\lim_{n \rightarrow \infty} \inf (f^\la)^{1/n}\ge \lim_{n
\rightarrow \infty} \inf \left(\sqrt{\epsilon(1-\epsilon)\over
(1-\gamma_n)(\gamma_n+{c_n\over n})} \cdot
{(1-\epsilon)^{(1-\epsilon)n}\epsilon^{\epsilon n} \over
(1-\gamma_n)^{(1-\gamma_n)n} (\gamma_n+{c_n\over
n})^{(\gamma_n+{c_n\over n})n}}\cdot ({e\over
n})^{c_n}\right)^{1/n}
\]
\[
\ge \min_{\gamma\in [{1\over 2e},\alpha]}
{\epsilon^\epsilon(1-\epsilon)^{1-\epsilon}\over \gamma_n^{\gamma}
(1-\gamma)^{1-\gamma} }.
\]
The function $f(x):=x^x (1-x)^{1-x}$ is differentiable in the open
interval $(0,1)$, symmetric around its minimum at $x={1\over 2}$,
decreasing in $(0,{1\over 2}]$, increasing in $[{1\over 2}, 1)$,
strictly less than 1 in this interval and tends to 1 at the
boundaries.  Since $\gamma_n\in [{1\over 2e},\alpha]\subseteq
(0,1)$ it follows that $f(\gamma_n)\le f(\alpha)$ if $1-{1\over
2e}\le \alpha$ and $\le f({1\over 2e})$ otherwise.
Choosing $\epsilon=\epsilon(\alpha)$ such that $\epsilon\le
\delta\min\{1-\alpha,{1\over 2e}\}$ for some very small $\delta>0$
we conclude that
\[
\lim_{n \rightarrow \infty} \inf (f^\la)^{1/n}\ge \min_{\gamma\in
[{1\over
2e},\alpha]}{\epsilon^\epsilon(1-\epsilon)^{1-\epsilon}\over
\gamma^{\gamma} (1-\gamma)^{1-\gamma} }\ge \min\{ {f(\delta{1\over
2e})\over f({1\over 2e})},  {f(\delta (1-\alpha))\over
f(1-\alpha)}\}>1,
\]
completing the proof.

\qed



\bigskip

\noindent{\bf Acknowledgements.} Thanks to Amitai Regev for
fruitful discussions and references.

\end{document}